\newtheorem{theorem}{Theorem}
\title{On a partition problem of Canfield and Wilf}
\author{\v{Z}eljka Ljuji{\' c}}
\address{CUNY Graduate Center,
New York, NY 10016}
\email{zeljka.ljujic@gmail.edu}
\author{Melvyn B.  Nathanson}
\address{Lehman College (CUNY), 
Bronx, NY 10468, and 
CUNY Graduate Center,
New York, NY 10016}
\email{melvyn.nathanson@lehman.cuny.edu}
\dedicatory{To the memory of John Selfridge}
\subjclass[2010]{11B34, 11P81, 05A17}
\keywords{Partitions, representation functions, additive number theory.}
\thanks{The work of M.B.N. was supported in part by a PSC-CUNY Research Award.}
\date{\today}
\begin{document}

\begin{abstract}
Let $A$ and $M$ be nonempty sets of positive integers.
A partition of the positive integer $n$ with parts in $A$ and multiplicities in $M$ is a representation of $n$ in the form 
$n = \sum_{a\in A} m_a a$
where $m_a \in M \cup \{0\}$  for all $a\in A$, and $m_a \in M$ for only finitely many $a$.
Denote by $p_{A,M}(n)$  the number of partitions of $n$ with parts in $A$ and multiplicities in $M$. 
It is proved that there exist infinite sets $A$ and $M$ of positive integers whose partition function $p_{A,M}$ has weakly superpolynomial but not superpolynomial growth.  
The counting function of the set $A$ is $A(x) = \sum_{a \in A, a\leq x} 1$. 
 It is also proved that $p_{A,M}$ must have at least weakly superpolynomial growth if $M$ is infinite and $A(x) \gg \log x$.
\end{abstract}

\maketitle

\section{Partition problems with restricted multiplicities}  \label{section:intro}

Let $A$ be a nonempty set of positive integers.
A \emph{partition of $n$ with parts in $A$} is a representation of $n$ in the form 
\[
n = \sum_{a\in A} m_a a
\]
where $m_a \in \mathbf{N} \cup \{0\}$  for all $a\in A$, and $m_a \in \mathbf{N}$ for only finitely many $a$.
Let $p_A(n)$ denote the number of partitions of $n$ with parts in $A$. 
If $\gcd(A)= d  > 1$,  then $p_A(n)=0$ for all $n$ not divisible 
by $d$, and so $p_A(n) = 0$ for infinitely many 
positive integers $n$.  
If $p_A(n) \geq 1$ for all sufficiently large $n$, then $\gcd(A) = 1.$

If $A = \{a_1,\ldots, a_k\}$ is a  set of $k$ relatively prime positive integers, then Schur~\cite{schu26} proved that
\begin{equation}  \label{LN:Schur}
p_A(n) \sim \frac{n^{k-1}}{(k-1)!a_1a_2\cdots a_k}.
\end{equation}
Nathanson~\cite{nath00a} gave a simpler proof of the more precise result:
\begin{equation}  \label{LN:Schur-MBN}
p_A(n) = \frac{n^{k-1}}{(k-1)!a_1a_2\cdots a_k} + O\left(n^{k-2}\right).
\end{equation}

An arithmetic function is a real-valued function whose domain is the set $\mathbf{N}$ of positive integers.  
An arithmetic function $f$ has \emph{polynomial growth} 
if there is a positive integer $k$ and an integer $N_0(k)$ such that $f(n) \leq n^k$ for all  $n \geq N_0(k)$.  
Equivalently, $f$ has polynomial growth if 
\[
\limsup_{n\rightarrow \infty} \frac{\log f(n)}{\log n} < \infty.
\]
An arithmetic function $f$ has \emph{superpolynomial growth} 
if 
\[
\lim_{n\rightarrow \infty} \frac{\log f(n)}{\log n} = \infty.
\]

The asymptotic formula~\eqref{LN:Schur} implies the following result of  Nathanson~\cite[Theorem 15.2, pp. 458--461]{nath00aa}.

\begin{theorem}
If $A$ is an infinite set of integers and $\gcd(A) = 1$, 
then $p_A(n)$ has superpolynomial growth.   
\end{theorem}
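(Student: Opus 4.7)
The plan is to show that for every positive integer $k$, one has $p_A(n) \geq c_k n^{k-1}$ for all sufficiently large $n$ (with $c_k > 0$), by finding a $k$-element subset of $A$ whose gcd is $1$ and then applying Schur's asymptotic formula~\eqref{LN:Schur} to that subset.

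First, I would establish the following extraction step: because $\gcd(A) = 1$, there is a finite subset $\{b_1,\ldots,b_j\} \subseteq A$ with $\gcd(b_1,\ldots,b_j) = 1$. Since $A$ is infinite, for every integer $k \geq j$ we can enlarge this to a $k$-element subset
\[
A_k = \{b_1,\ldots,b_j, c_{j+1},\ldots,c_k\} \subseteq A,
\]
and adding new elements cannot increase the gcd, so $\gcd(A_k) = 1$ as well. For $k < j$ one simply takes any $k$-element subset of $\{b_1,\ldots,b_j\}$ after absorbing it into a later argument; alternatively, we only need the conclusion for arbitrarily large $k$, so it is harmless to restrict to $k \geq j$.

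Next, I would use the trivial monotonicity $p_{A_k}(n) \leq p_A(n)$, which holds because each representation counted by $p_{A_k}(n)$ uses parts from $A_k \subseteq A$ and hence is also counted by $p_A(n)$. Applying Schur's formula~\eqref{LN:Schur} to the $k$-element set $A_k$ of relatively prime positive integers gives an asymptotic
\[
p_{A_k}(n) \sim \frac{n^{k-1}}{(k-1)!\, a_1 a_2 \cdots a_k},
\]
where $a_1,\ldots,a_k$ are the elements of $A_k$. Hence there exists a constant $c_k > 0$ and a threshold $N_0(k)$ such that $p_A(n) \geq p_{A_k}(n) \geq c_k n^{k-1}$ for all $n \geq N_0(k)$.

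Finally, I would conclude by taking logarithms: for $n \geq N_0(k)$,
\[
\frac{\log p_A(n)}{\log n} \geq (k-1) + \frac{\log c_k}{\log n},
\]
so $\liminf_{n\to\infty} \log p_A(n)/\log n \geq k-1$. Since $k$ is arbitrary, $\lim_{n\to\infty} \log p_A(n)/\log n = \infty$, which is the definition of superpolynomial growth. There is no real obstacle here; the only subtle point is the extraction of a coprime $k$-subset of $A$, and the rest is a direct application of~\eqref{LN:Schur} combined with the monotonicity $p_{A_k} \leq p_A$.
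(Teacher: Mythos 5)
Your proposal is correct and follows exactly the route the paper indicates: the paper states that this theorem is implied by Schur's asymptotic formula~\eqref{LN:Schur} (citing Nathanson's book for details), and your argument --- extract a coprime $k$-element subset of $A$, use the monotonicity $p_{A_k}(n) \leq p_A(n)$, and apply~\eqref{LN:Schur} to get $\liminf \log p_A(n)/\log n \geq k-1$ for every $k$ --- is precisely that standard derivation. The extraction step and the handling of arbitrarily large $k$ are both handled correctly.
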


Canfield and Wilf~\cite{canf-wilf10} studied the following variation of the classical partition problem.  
Let $A$ and $M$ be nonempty sets of positive integers.
A \emph{partition of $n$ with parts in $A$ and multiplicities in $M$} is a representation of $n$ in the form 
\[
n = \sum_{a\in A} m_a a
\]
where $m_a \in M \cup \{0\}$  for all $a\in A$, and $m_a \in M$ for only finitely many $a$.
We denote by $p_{A,M}(n)$  the number of partitions of $n$ with parts in $A$ and multiplicities in $M$. 
Note that $p_{A,M}(0)=1$ and $p_{A,M}(n)=0$ for all $n < 0$.  

Let $A$ and $M$ be infinite sets of positive integers such that 
$p_{A,M}(n) \geq 1$ for all sufficiently large $n$.
Canfield and Wilf (``Unsolved problem 1'' in~\cite{canf-wilf10}) asked, ``Must $p_{A,M}(N)$ then be of superpolynomial growth?''
We prove that the answer is ``no.''

\section{Weakly superpolynomial functions}   \label{section:weaklysuper}

Polynomial and superpolynomial growth functions were first studied in connection with the growth of finitely and infinitely generated groups (cf. Grigorchuk and Pak~\cite{grig-pak08}, Nathanson~\cite{nath10d}).  
Growth functions of groups are always strictly increasing, but even strictly increasing functions that do not have polynomial growth are not necessarily superpolynomial.  

We shall call an arithmetic function \emph{weakly superpolynomial} if it does not have polynomial growth.
Equivalently, the function $f$ is weakly superpolynomial if for every positive integer $k$ there are infinitely many positive integers $n$ such that $f(n) > n^k$.  
The partition functions that will be constructed  in this paper are weakly superpolynomial but not superpolynomial.

We note that an arithmetic function $f$ is weakly superpolynomial but not superpolynomial if and only if 
\[
\limsup_{n\rightarrow \infty} \frac{\log f(n)}{\log n} = \infty
\]
and
\[
\liminf_{n\rightarrow \infty} \frac{\log f(n)}{\log n} < \infty.
\]
In this section we construct a strictly increasing arithmetic function that is weakly superpolynomial but not polynomial.

Let $(n_k)_{k=1}^{\infty}$ be a  sequence of positive integers such that $n_1 = 1$ and 
\[
n_{k+1} > 2 n_k^k
\]
for all $k \geq 1$.
We define the arithmetic function
\[
f(n) = n_k^k + (n-n_k) \qquad\text{for $n_k \leq n < n_{k+1}$.}
\]
This function is strictly increasing because
\[
n_k^k - n_k \leq n_{k+1}^{k+1} - n_{k+1}
\]
for all $k \geq 1$.
We have  
\[
\lim_{k\rightarrow\infty} \frac{\log f(n_k)}{\log n_k} 
= \lim_{k\rightarrow\infty} \frac{k\log n_k}{\log n_k} = \infty
\]
and so 
\[
\limsup_{n\rightarrow\infty} \frac{\log f(n)}{\log n} = \infty.
\]
Therefore, the function $f$ does not have polynomial growth.

For every positive integer $n$ there is a positive integer $k$ such that 
$n_k \leq n < n_{k+1}$. 
Then $f(n) = n + n_k^k - n_k \geq n$ and so 
\begin{equation}   \label{LN:inf1}
\liminf_{n\rightarrow\infty} \frac{\log f(n)}{\log n} \geq 1.
\end{equation}
The inequalities 
\[
 f(n_{k+1}-1) = n_k^k + (n_{k+1} -1- n_k) < \frac{3n_{k+1}}{2}
\]
and
\[
n_{k+1}-1 >  \frac{n_{k+1}}{2}
\]
imply that 
\[
1 <  \frac{\log f(n_{k+1}-1)}{\log (n_{k+1}-1)} 
< \frac{\log ( 3n_{k+1}/2 )}{\log (n_{k+1}/2)} 
= 1 + \frac{\log 3}{\log (n_{k+1}/2)}
\]
and so 
\[
\lim_{k\rightarrow\infty}  \frac{\log f(n_{k+1}-1)}{\log (n_{k+1}-1)} = 1.
\]
Therefore,
\begin{equation}   \label{LN:inf2}
\liminf_{n\rightarrow\infty} \frac{\log f(n)}{\log n} \leq 1.
\end{equation}
Combining~\eqref{LN:inf1} and~\eqref{LN:inf2}, we obtain 
\[
\liminf_{n\rightarrow\infty} \frac{\log f(n)}{\log n} = 1.
\]  
Thus, the function $f$ has weakly superpolynomial but not superpolynomial growth.

\section{Weakly superpolynomial partition functions}    \label{section:partitions}

\begin{theorem}      \label{LN:theorem:AM}
Let $a$ be an integer, $a \geq 2$,  and let $A = \{a^i\}_{i=0}^{\infty}$.
Let $M$ be an infinite set of positive integers such that $M$ contains $ \{1,2,\ldots, a-1\}$ and  no element of $M$ is divisible by $a$.  
Then $p_{A,M}(n) \geq 1$ for all $n \in \mathbf{N}$, and $p_{A,M}\left(n\right) = 1$ for all $n \in A$.
In particular, the partition function 
$p_{A,M}$ does not have  superpolynomial growth.
\end{theorem}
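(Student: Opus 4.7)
The plan is to exploit base-$a$ representations of positive integers, which interact well with both the set of parts $A=\{a^i\}_{i=0}^{\infty}$ and the divisibility constraint on $M$.

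First I would establish that $p_{A,M}(n)\geq 1$ for every $n\in\mathbf{N}$. Writing $n$ in base $a$, one has $n=\sum_{i=0}^{N} d_i a^i$ with $d_i\in\{0,1,\ldots,a-1\}$, and the nonzero digits lie in $\{1,\ldots,a-1\}\subseteq M$. This is a valid $(A,M)$-partition of $n$, so $p_{A,M}(n)\geq 1$.

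Next, I would prove uniqueness for $n=a^k$. The partition $a^k=1\cdot a^k$ is valid since $1\in M$. Conversely, suppose $a^k=\sum_{i=0}^{N} m_i a^i$ with $m_i\in M\cup\{0\}$. Let $j$ be the smallest index with $m_j\neq 0$. If $j<k$, then dividing by $a^j$ yields $a^{k-j}=m_j+a\sum_{i>j}m_i a^{i-j-1}$, so $m_j\equiv a^{k-j}\equiv 0\pmod{a}$, contradicting the hypothesis that no element of $M$ is divisible by $a$. If $j>k$, then $\sum_i m_i a^i\geq m_j a^j\geq a^j>a^k$, a contradiction. Hence $j=k$, and then $a^k=m_k a^k+\sum_{i>k} m_i a^i\geq m_k a^k\geq a^k$ forces $m_k=1$ and $m_i=0$ for $i>k$. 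So the representation is unique and $p_{A,M}(a^k)=1$.

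Finally, since $p_{A,M}(a^k)=1$ for every $k\geq 0$, we have
\[
\liminf_{n\rightarrow\infty}\frac{\log p_{A,M}(n)}{\log n}\leq \lim_{k\rightarrow\infty}\frac{\log 1}{\log a^k}=0<\infty,
\]
so $p_{A,M}$ fails to have superpolynomial growth. The only real obstacle is the uniqueness argument in the second step, and that is handled immediately by the congruence $m_j\equiv 0\pmod{a}$ that arises from the lowest nonzero term, together with the condition that $M$ avoids multiples of $a$.
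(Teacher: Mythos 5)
Your proposal is correct and follows essentially the same route as the paper: the base-$a$ expansion gives existence, and uniqueness for $n=a^k$ comes from reducing the lowest nonzero term modulo $a$ to contradict the hypothesis that $M$ contains no multiple of $a$. Your write-up is if anything slightly more complete, since you explicitly rule out terms $a^j$ with $j\geq k$, which the paper absorbs silently into its normal form for a putative second representation.
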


\begin{proof}
Every positive integer $n$ has a unique $a$-adic representation, and so $p_{A,M}(n) \geq 1$ for all $n \in \mathbf{N}$.

We shall prove that the only partition of $a^r$ with parts in $A$ and multiplicities in $M$ is $a^r = 1 \cdot a^r.$  
If there were another representation, then it could be written in the form  
\[
a^r = \sum_{i=1}^k m_i a^{j_i}
\]
where $k \geq 2$, $m_i \in M$ for $i=1,\ldots, k$,  and
$0 \leq j_1 < j_2 < \cdots < j_k < r$.
Then 
\[
a^{r-j_1} = m_1 + a\sum_{i=2}^k m_ia^{j_i-j_1-1}.
\]
We have $j_i-j_1-1\geq 0$ for $i=2,\ldots, k$, and so $m_1$ is divisible by $a$, which is absurd.
Therefore, $p_{A,M}\left(a^r\right) = 1$ for all $r \geq 0$. 
It follows that 
\[
\liminf_{n\rightarrow\infty} \frac{\log p_{A,M}(n)}{\log n} 
= \liminf_{r\rightarrow\infty}
\frac{\log p_{A,M}\left(a^r \right)}{\log a^r} = 0
\]
and so the partition function $p_{A,M}$ is not superpolynomial.
\end{proof}

\begin{theorem}    \label{LN:theorem:WeaklySuper}
Let $A$ and $M$ be infinite sets of positive integers,
and let $p_{A,M}(n)$ denote the number of partitions of $n$ with parts in $A$ and multiplicities in $M$.
If $A(x) \geq c\log x$ for some $c > 0$ and all $x\geq x_0(A)$, then for every positive integer $k$ there exist infinitely many integers $n$ such that 
\[
p_{A,M}(n) \geq n^k.
\]
In particular, the partition function $p_{A,M}$ is weakly superpolynomial.
\end{theorem}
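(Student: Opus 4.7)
The plan is to use a pigeonhole argument. Fix a positive integer $k$. Because $M$ is infinite we may extract as many multiplicities from $M$ as we wish; we will fix $s$ of them, with $s$ depending only on $k$ and $c$, and then for each large $x$ independently assign one of these multiplicities to each element of $A_x := \{a \in A : a \leq x\}$. This produces a family of $s^{|A_x|}$ distinct partitions whose values all lie in a bounded interval, so some integer is represented very often.

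More precisely, fix $s \geq 2$ so that $c \log s > 3(k+1)$, and let $M_s = \{m_1 < m_2 < \cdots < m_s\}$ be any $s$ elements of $M$. For each $x \geq x_0(A)$ set $t = |A_x|$ (so $t \geq c\log x$) and, for each function $\phi : A_x \to M_s$, let
\[
n_\phi = \sum_{a \in A_x} \phi(a)\, a.
\]
Distinct $\phi$ give distinct partitions (with multiplicity zero outside $A_x$), and each $n_\phi$ is a positive integer lying in $[m_1, m_s t x]$. By the pigeonhole principle there is an integer $n^{*} = n^{*}(x)$ for which
\[
p_{A,M}(n^{*}) \geq \frac{s^{t}}{m_s\, t\, x}.
\]

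The choice of $s$ is designed so that this lower bound exceeds $(n^{*})^{k}$. Since $n^{*} \leq m_s t x$, it suffices to show $s^{t} \geq (m_s t x)^{k+1}$, equivalently $t \log s \geq (k+1)\log(m_s t x)$. Using $t \leq x$ and that $m_s$ is a fixed constant, the right-hand side is at most $3(k+1)\log x$ for all sufficiently large $x$, while $t\log s \geq c \log s \cdot \log x > 3(k+1)\log x$. Hence $p_{A,M}(n^{*}(x)) \geq (n^{*}(x))^{k}$ once $x$ is large enough.

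It remains to produce infinitely many distinct values of $n^{*}$. Because $A$ is infinite, $\max A_x \to \infty$ with $x$, and so $n^{*}(x) \geq m_1 \max A_x \to \infty$; extracting a subsequence yields infinitely many distinct integers $n$ with $p_{A,M}(n) \geq n^{k}$. The one slightly delicate point is the order of quantifiers: $s$ must be fixed in terms of $k$ and $c$ alone \emph{before} the pigeonhole estimate is applied, so that $m_s$ remains a constant as $x\to\infty$; the hypothesis that $M$ is infinite is exactly what makes this possible for every $k$.
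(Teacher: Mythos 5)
Your proof is correct, and its engine is the same as the paper's: produce exponentially many distinct partitions with parts at most $x$ and bounded multiplicities, note that the integers they represent lie in an interval of length polynomial in $x$, and pigeonhole. But you execute two steps genuinely differently, and in both cases your version is arguably cleaner. First, the paper uses \emph{all} multiplicities of $M$ up to $x$ (together with $0$), obtaining $(M(x)+1)^{A(x)}$ partitions of integers at most $x^2A(x)$, and then invokes $M(x)\to\infty$ to absorb the polynomial loss $2x^{3k+3}$ from the pigeonhole; you instead fix a block $M_s\subseteq M$ of size $s$ depending only on $k$ and $c$ (with $c\log s>3(k+1)$) \emph{before} letting $x\to\infty$. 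Both deployments of the infinitude of $M$ are legitimate; yours isolates exactly how much of $M$ is needed for each $k$, and you correctly flag the quantifier order as the delicate point. Second, your argument that the integers $n^{*}(x)$ take infinitely many values is much simpler than the paper's: since every $\phi$ assigns a \emph{positive} multiplicity to every element of $A_x$, you get $n^{*}(x)\geq m_1\max A_x\to\infty$ for free, whereas the paper's distinguished integer $n_x$ (a maximizer of $p_{A,M}$ over an initial segment) has no such a priori lower bound, forcing an inductive choice of $x_{r+1}$ so large that $p_{A,M}(n_{x_{r+1}})$ exceeds an explicit upper bound for all earlier $p_{A,M}(n_{x_i})$. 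The only small thing you should make explicit is the inequality $t=A(x)\leq x$ (clear, since $A_x\subseteq\{1,\dots,\lfloor x\rfloor\}$), which you use silently when bounding $\log(m_s t x)\leq 3\log x$ for large $x$.
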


\begin{proof}
Let $x \geq 1$ and let 
\[
A(x) = \sum_{\substack{a\in A \\a \leq x}}1
\qquad\text{and}\qquad
M(x) = \sum_{\substack{m\in M \\m \leq x}}1
\]
denote the counting functions of the sets $A$ and $M$, respectively.
If $n \leq x$ and $n = \sum_{a\in A}m_a a$ is a partition of $n$ 
with parts in $A$ and multiplicities in $M$, 
then $a \leq x$ and $m_a \leq x$, and so
\begin{equation}  \label{LN:ineq}
\max\left\{p_{A,M}(n) : n \leq x  \right\} \leq \sum_{n\leq x} p_{A,M}(n) \leq (M(x)+1)^{A(x)}.
\end{equation}
Conversely, if the integer $n$ can be represented in the form 
$n = \sum_{a\in A}m_a a$ with $a \leq x$ and $m_a \leq x$, 
then $n \leq x^2A(x)$ and so
\[
\sum_{n\leq x^2A(x)} p_{A,M}(n) \geq (M(x)+1)^{A(x)} > 
M(x)^{A(x)}.
\]
Choose an integer $n_x$ such that 
\[
p_{A,M}(n_x) = \max\left\{ p_{A,M}(n) : n \leq x^2A(x) \right\}.
\]
Then $n_x \leq x^3$ and 
\[
M(x)^{A(x)} < \sum_{n\leq x^2A(x)} p_{A,M}(n) 
\leq \left( x^2A(x) + 1 \right) p_{A,M}(n_x)
\leq 2x^3 p_{A,M}(n_x)
\]
and so, for all $x \geq x_0(A)$, 
\[
p_{A,M}(n_x) > \frac{M(x)^{A(x)}}{2x^3} 
\geq \frac{M(x)^{c\log x}}{2x^3}.
\]
Let $k$ be a positive integer.  
Because the set $M$ is infinite, there exists $x_1(A,k) \geq x_0(A)$ such that, for all $x\geq x_1(A,k)$, we have 
\[
\log M(x) > \frac{\log 2}{c\log x} + \frac{3k+3}{c}
\]
and so 
\[
p_{A,M}(n_x) > x^{3k} \geq n_x^k.
\]

We shall iterate this process to construct inductively an infinite set of integers $\{n_{x_i}:i=1,2,\ldots \}$ such that 
\[
p_{A,M}(n_{x_i}) > n_{x_i}^k
\]
for all $i$.  
Let $r \geq 1$, and suppose that the integers 
$n_{x_1}, \ldots, n_{x_r}$ have been constructed.  
Choose $x_{r+1}$ so that 
\[
x_{r+1}^{3k} > \left(  M\left(x_i^2 A(x_i)\right) + 1 \right)^{A\left( x_i^2A(x_i) \right)}
\]
for all $i=1,\ldots, r$, and let $n_{x_{r+1}}$ be the integer constructed according to procedure above.  
Replacing $x$ with $x_i^2A(x_i)$ in inequality~\eqref{LN:ineq}, 
we see that 
\[
p\left(n_{x_i}\right) \leq \left(  M\left(x_i^2 A(x_i)\right) + 1 \right)^{A\left( x_i^2A(x_i)\right)}
\]
and so 
\[
p\left(n_{x_{r+1}} \right) > x_{r+1}^{3k}  > p\left(n_{x_i} \right) 
\]
for $i=1,\ldots, r$, and so $n_{x_{r+1}}  \neq n_{x_i}$ for $i=1,\ldots, r$.  
This completes the induction and the proof.
\end{proof}

\begin{theorem}
The partition function for the sets $A$ and $M$ constructed in Theorem~\ref{LN:theorem:AM} is weakly superpolynomial.
\end{theorem}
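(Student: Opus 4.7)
The plan is to obtain the result as a direct application of Theorem~\ref{LN:theorem:WeaklySuper}, with a minor computation of the counting function of $A$.

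First, I would compute $A(x)$ for the geometric set $A = \{a^i\}_{i=0}^{\infty}$. The elements of $A$ that are at most $x$ are exactly $a^0, a^1, \ldots, a^{\lfloor \log_a x\rfloor}$, so
\[
A(x) = \lfloor \log_a x \rfloor + 1 \geq \frac{\log x}{\log a}
\]
for all $x \geq 1$. Thus $A(x) \geq c \log x$ with $c = 1/\log a > 0$, so the hypothesis of Theorem~\ref{LN:theorem:WeaklySuper} is satisfied.

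Since $M$ is infinite by assumption, Theorem~\ref{LN:theorem:WeaklySuper} then immediately yields that for every positive integer $k$ there are infinitely many $n$ with $p_{A,M}(n) \geq n^k$; that is, $p_{A,M}$ is weakly superpolynomial. Combined with Theorem~\ref{LN:theorem:AM}, which shows that $p_{A,M}$ is not superpolynomial (because $p_{A,M}(a^r)=1$ forces $\liminf \log p_{A,M}(n)/\log n = 0$), we conclude that $p_{A,M}$ is weakly superpolynomial but not superpolynomial, as desired.

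There is no substantial obstacle here: the work has already been done in Theorems~\ref{LN:theorem:AM} and~\ref{LN:theorem:WeaklySuper}, and the only thing left to verify is the essentially trivial lower bound $A(x) \gg \log x$ for $A = \{a^i\}$. The point of stating this as a separate theorem is to assemble the two halves (not superpolynomial from Theorem~\ref{LN:theorem:AM}, weakly superpolynomial from Theorem~\ref{LN:theorem:WeaklySuper}) into the affirmative answer to Canfield and Wilf's question that was announced in the abstract.
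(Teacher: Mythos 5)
Your proposal is correct and follows the same route as the paper: compute the counting function of $A=\{a^i\}$, verify the hypothesis $A(x)\geq c\log x$ of Theorem~\ref{LN:theorem:WeaklySuper}, and apply that theorem. In fact your computation $A(x)=\lfloor \log_a x\rfloor+1\geq \log x/\log a$ is slightly more careful than the paper's, which asserts $A(x)=[\log x]+1>\log x$ without tracking the base of the logarithm; the constant $c=1/\log a$ is exactly what the hypothesis of Theorem~\ref{LN:theorem:WeaklySuper} calls for.
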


\begin{proof}
For $a \geq 2$, the  counting function for the set $A = \{a^i\}_{i=1}^{\infty}$ is $A(x) = [\log x]+1 > \log x$, and the result follows from Theorem~\ref{LN:theorem:WeaklySuper}.
\end{proof}

\section{Open problems}      \label{section:problems}
 
\begin{enumerate}
\item
Do there exist infinite sets $A$ and $M$ of positive integers such that $p_{A,M}(n) \geq 1$ for all sufficiently large $n$ and 
$p_{A,M}(n)$ has polynomial growth?  
This is, perhaps, the intended statement of Canfield and Wilf's "Unsolved problem 1."  

\item
By Theorem~\ref{LN:theorem:WeaklySuper}, 
if the partition function $p_{A,M}$ has polynomial growth, then the set $A$ must have sub-logarithmic growth, that is, $A(x) \gg \log x$ is impossible.   
\begin{enumerate}
\item
Let $A = \{k!\}_{k=1}^{\infty}$.  
Does there exist an infinite set $M$ of positive integers such that $p_{A,M}(n) \geq 1$ for all sufficiently large $n$ and $p_{A,M}$ has polynomial growth?
\item
Let $A = \left\{k^k\right\}_{k=1}^{\infty}$.  
Does there exist an infinite set $M$ of positive integers such that $p_{A,M}(n) \geq 1$ for all sufficiently large $n$ and $p_{A,M}$ has polynomial growth?
\end{enumerate}

\item
Let $A$ be an infinite set of positive integers 
and let $M=\mathbf{N}$.  
Bateman and Erd\H os~\cite{bate-erdo56}  proved that 
the partition function $p_A = p_{A,\mathbf{N}}$ is eventually strictly increasing if and only if $\gcd(A\setminus \{a\})=1$ for all $a\in A$.
It would be interesting to extend this result to partition functions with restricted multiplicities:  Determine a necessary and sufficient condition for infinite sets $A$ and $M$ of positive integers to have the property that $p_{A,M}(n) < p_{A,M}(n+1)$ 
or $p_{A,M}(n) \leq p_{A,M}(n+1)$ for all sufficiently large $n$.

\end{enumerate}

\def\cprime{$'$} \def\cprime{$'$} \def\cprime{$'$}
\providecommand{\bysame}{\leavevmode\hbox to3em{\hrulefill}\thinspace}
\providecommand{\MR}{\relax\ifhmode\unskip\space\fi MR }
% \MRhref is called by the amsart/book/proc definition of \MR.
\providecommand{\MRhref}[2]{%
  \href{http://www.ams.org/mathscinet-getitem?mr=#1}{#2}
}
\providecommand{\href}[2]{#2}

\end{document}